\patchcmd{\subsection}{-.5em}{.5em}{}{} 
\patchcmd{\subsubsection}{-.5em}{.5em}{}{} 
\patchcmd{\section}{\normalfont}{\bfseries}{}{}
\newtheorem{theorem}{Theorem}[section]
\numberwithin{equation}{section}
\theoremstyle{plain}
\newtheorem{thm}{Theorem}[section]
\newtheorem{lem}[thm]{Lemma}
\theoremstyle{definition}
\theoremstyle{remark}
\newtheorem{rem}{Remark}[section]
\numberwithin{equation}{section}
\begin{document}
\title[]{Exploring the Pseudo-modes of Schrödinger Operators with Complex Potentials: A Focus on Resolvent Norm Estimates and Spectral Stability}
	
\subjclass[2020]{34L05, 34L40, 81Q20.}
	
\keywords{Schrödinger operators; complex potential; linear operators; pseudo-spectrum; semi-classical analysis.}
	
\author{\bfseries Sameh Gana$^*$ }
\address{Department of Basic Sciences, 
Deanship of Preparatory Year and Supporting Studies, 
Imam Abdulrahman Bin Faisal University, P.O. Box 1982, Dammam, 34212, Saudi Arabia.}
\address{$^*$Corresponding author:\textnormal{sbgana@iau.edu.sa}}
\begin{abstract}
This paper aims to investigate the pseudo-modes of the one-dimensional Schrödinger operator with complex potentials, focusing on the behavior of the resolvent norm along specific curves in the complex plane and assessing the stability of the spectrum under small perturbations. The study builds upon previous work of E.B. Davies, L.S. Boulton, and N. Trefethen, specifically examining the resolvent norm of the complex harmonic oscillator along curves of the form $z_{\eta }= b\eta + c\eta ^{p} $ where $ b > 0$, $ \frac{1}{3}< p <3 $ independent of $\eta> 0$. The present work narrows the focus to the case where $p = \frac{1}{3}$. Numerical computations of pseudo-eigenvalues are performed to verify spectral instability.
\end{abstract}
\maketitle
\section{Introduction}
The study of pseudo-spectra has developed as a significant area of research within operator theory, especially in analyzing non-self-adjoint operators. The pseudo-spectrum of an operator provides critical insights into its stability under perturbations, which is crucial for many applications in quantum physics, fluid dynamics and control systems (see ~\cite{Orszag,Gana2,Gana3,Gana5,Zworskin130,Mhadhbi1,Mhadhbi2}). We refer to ~\cite{Daviesn20,Daviesn30,Daviesn40,Trefethenn100,Johannesn120} for basic ideas and general results about pseudo-spectra.\\
Let  $\mathcal{H}$ be a complex Hilbert space and $A$:
$\mathcal{H}\rightarrow\mathcal{H}$ be a closed densely defined
operator. Let $\sigma(A)$ be the spectrum of $A$. Recall that if A
is self-adjoint or, more generally, normal, we have
\begin{equation*}
\| (A-zI)^{-1}\| \leq (dist(z,\sigma (A))^{-1},
\end{equation*}
for all
$z\notin \sigma(A)$.
However, this estimation does not hold for non-self-adjoint operators, where the resolvent norm can be significantly large even when $z$ is located far from the spectrum. 
The basic idea is to study not only the points where the resolvent
is undefined, i.e, the spectrum, but also those where the resolvent norm is large.\\
Let $\varepsilon > 0$, the $\varepsilon$-pseudo-spectrum
$\sigma_{\varepsilon}(A)$ of $A$ is the set of all complex numbers
$z$ such that the resolvent norm $\| ( A-z I) ^{-1}\|$ of $A$ is greater than, or equal to $\frac{1}{\varepsilon}$:
\begin{equation*}
\sigma_{\varepsilon}(A) =\{z\in{\mathbb{C}}
: \|(A-z I)^{-1}\|\geq\dfrac{1}{\varepsilon}
\}.
\end{equation*}
We adopt the convention that $\|(A-z I)^{-1}\|=\infty$ for $z\in
\sigma(A)$.\\
Studying pseudo-spectra allows understanding the spectrum's stability under small perturbations. Indeed, the
Roch-Silbermann theorem ~\cite{Rochn140} asserts that:
$$
\sigma _{\varepsilon }( A ) =\bigcup_{\substack{\Delta A\in
\mathcal{L}(\mathcal{H}) \\ \| \Delta A\| \leq\varepsilon }} \sigma
( A +\Delta A).
$$
Thus, a number $z$ is in the interior
of the $\varepsilon$-pseudo-spectrum of $A$ if and only if it is in
the spectrum of some perturbed operator $A +\Delta A$ with $\|
\Delta A\|
\leq \varepsilon$.\\
We notice that pseudo-spectra of an operator can give more
information about the perturbation of the spectra, but this does not
mean that the study of perturbations is the main thing
pseudo-spectra are useful for. Refer to ~\cite{Trefethenn90,Katon50,horn60,horn70} for more
examples and applications.

This paper aims to investigate the pseudo-spectrum of the one-dimensional Schrödinger operator with complex potential  $$H_{c}=-\frac{d^{2}}{dx^{2}}+cx^{2},$$ where $c$ is a complex number such that  $Re(c)>0$ and $Im(c)>0 $.\\
Schrödinger operator with complex potential presents a compelling case for analysis due to its applications in open quantum systems, resonance phenomena, and PT-symmetric quantum mechanics. Unlike self-adjoint operators, where the spectrum fully captures the stability and dynamics of the system, non-self-adjoint operators can exhibit highly non-trivial pseudospectral behavior, whereby the resolvent norm becomes large far from the spectrum. This phenomenon makes the study of the pseudo-spectrum essential for understanding the sensitivity of the spectrum to perturbations and the transient behavior of the associated quantum system.\\
Recent contributions by researchers such as E.B. Davies, L.S. Boulton, and N. Trefethen ~\cite{Daviesn20,Daviesn30,Trefethenn90,TrefethenF,Boultonn10} have significantly advanced our understanding of the non-self-adjoint harmonic oscillator $H_{c}$. In particular, L.S. Boulton's work has established key results regarding the behavior of the resolvent norm for the complex harmonic oscillator. In ~\cite{Boultonn10}, the author demonstrated that the resolvent norm tends to infinity along specific curves in the complex plane, characterized by the form $z_{\eta }= b\eta + c\eta ^{p} $, where $b>0$, $ \frac{1}{3}<p <3 $, and  $c$ is a complex number with positive real and imaginary parts. This result plays a crucial role in interpreting the structure of the pseudo-spectra and suggests insights into the stability of the spectrum under small perturbations. We can deduce this conjecture from a study of Dencker, Sjöstrand
and Zworski ~\cite{Zworski}, which gives bounds on the resolvent for a semi-classical pseudo-differential operator
in a very general setting. 

In this paper, we build upon the foundational work of Boulton and others by investigating the resolvent norm's estimates along the curve defined by $z_{\eta} = b\eta + c\eta^{1/3}$. The choice of $p = \frac{1}{3}$ is motivated by the desire to explore the boundary of the previously established results and to understand the implications of this particular choice on the pseudospectral properties of the complex harmonic oscillator. By conducting numerical computations of the pseudo-spectra, we aim to verify the spectral instability predicted by the theoretical framework.\\
The structure of this paper is organized as follows: In Section 2, we provide a detailed overview of the one-dimensional Schrödinger operator named the complex harmonic oscillator, focusing on the resolvent norm, discussing its properties and the relevance of analyzing its behavior along specific curves in the complex plane of the form  $z_{\eta }= b\eta + c\eta ^{p} $ defined by L.S.Boulton ~\cite{Boultonn10}. In Section 3, we present our findings regarding the resolvent estimates in the case $p =
\frac{1}{3}$. Then, section 4 focuses on numerical computations of pseudo-eigenvalues for a discretization of the operator $H_{c} $ using spectral collocation methods and Chebfun algorithms ~\cite {TrefethenS,Driscoll,TrefethenF,TrefethenT,Hale}. Finally, Section 5 will conclude with a summary of our results and suggestions for future research directions.
\section{Some results for the complex harmonic oscillator}
We define the complex harmonic oscillator to be the operator
\begin{equation*}
H_{c}=-\frac{d^{2}}{dx^{2}}+cx^{2},
\end{equation*}
where $c$ is a complex number such that  $Re(c)>0$ and $Im(c)>0 $.\\
We assume that $H_{c}$ acts in $L^{2}(\mathbb{R})$ with Dirichlet boundary conditions.\\The domain of $H_{c}$ is
$$\mathcal{D}(H_{c})=W^{1,2}(\mathbb{R})\cap \{f\in L^{2}(\mathbb{R}):\int_{\mathbb{R}} x^{2}|f(x)|^{2}dx < \infty\}.$$
For $0\leq \alpha,\beta\leq\frac{\pi}{2}$, we define the angular
sector: $S(-\alpha,\beta )=\{z\in \mathbb{C}:-\alpha<arg(z)
<\beta\}.$
The operator $H_{c}$ is sectorial since $$Num(H_{c})\subset S(0,arg(c)),$$ 
where $Num(H_{c})$ is the
numerical range of $H_{c}$.\\
We define the closed m-sectorial quadratic form
$$h_{c}(f,g)=\int_{\mathbb{R}} f'(x) \overline{g'(x)}dx+ c\int_{\mathbb{R}}x^{2}f(x)\overline{g(x)}dx,$$
where $f$ and $g$ are in $W^{1,2}(\mathbb{R})\cap \{f\in L^{2}(\mathbb{R}):\int_{\mathbb{R}} x^{2}|f(x)|^{2}dx < \infty\}$.\\
$H_{c}$ is the m-sectorial operator
associated to $h_{c}$  via the Friedrich representation theorem (see
~\cite{Katon50} VI.$\S$2 ).\\ As proved by L.S.Boulton in
~\cite{Boultonn10}, since 
$C_{c}^{\infty}(\mathbb{R})\subset
\mathcal{S}(\mathbb{R})\subset \mathcal{D}(H_{c})$,
we conclude that
$C_{c}^{\infty}(\mathbb{R})$ and $\mathcal{S}(\mathbb{R})$ are form cores for $H_{c}$.\\
We prove that if $c\in\mathbb{R}_{+}$, $H_{c}$ is self-adjoint. However,
if $Im c\neq0$, then $H_{c}$ is no longer a normal operator. \\
Let $H_{c}^{*}$ be the adjoint of $H_{c}$,
we have:
$$H_{c}^{*}=-\frac{d^{2}}{dx^{2}}+\overline{c} x^{2},$$
and 
$\mathcal{D}(H_{c}^{*})=\mathcal{D}(H_{c}).$ \\
For any function $f\in
\mathcal{D}(H_{c}^{*})$, we have
$$H_{c}H_{c}^{*}f = f^{(4)}
-\overline{c}(x^{2}f)^{(2)}-cx^{2}f^{(2)}+|c|^{2}x^{4}f,$$
$$H_{c}^{*}H_{c}f = f^{(4)}
-c(x^{2}f)^{(2)}-\overline{c}x^{2}f^{(2)}+|c|^{2}x^{4}f.$$ Then
 $H_{c}^{*}H_{c}=H_{c}H_{c}^{*}$ if and only if $c=\overline{c}$.\\
The spectrum of $H_{c}$ is only composed of eigenvalues with
multiplicity one:
$$
\sigma (H_{c})=\{\lambda
_{n}=c^{1/2}(2n+1);n\in \mathbb{N}\}.
$$ 
It is easy to check that if $H_{\text{n}}$ is the
$n^{\text{th}}$ Hermite polynomial, then $\Psi _{n}(x)  =
c^{1/8}H_{ n}(c ^{1/4}x) e^{- c^{1/2} x^{2}/ 2}$ is an
eigenfunction of $H_{c}$ associated to the eigenvalue $\lambda
_{\text{n}}$. \\
 Figure \ref{fig:Fig2} displays the first twenty eigenvalues of $ H_{c}$. Figure \ref{fig:Fig3} shows the eigenfunctions of  $H_{c}$ associated to the eigenvalue $\lambda_{\text{1}}$, $\lambda_{\text{5}}$, $\lambda_{\text{10}}$, and $\lambda
_{\text{20}}$. 
\begin{figure}
	\caption{The first twenty eigenvalues of $ H_{c}$ for $c=1+5i$.}
	\includegraphics[width=12cm]{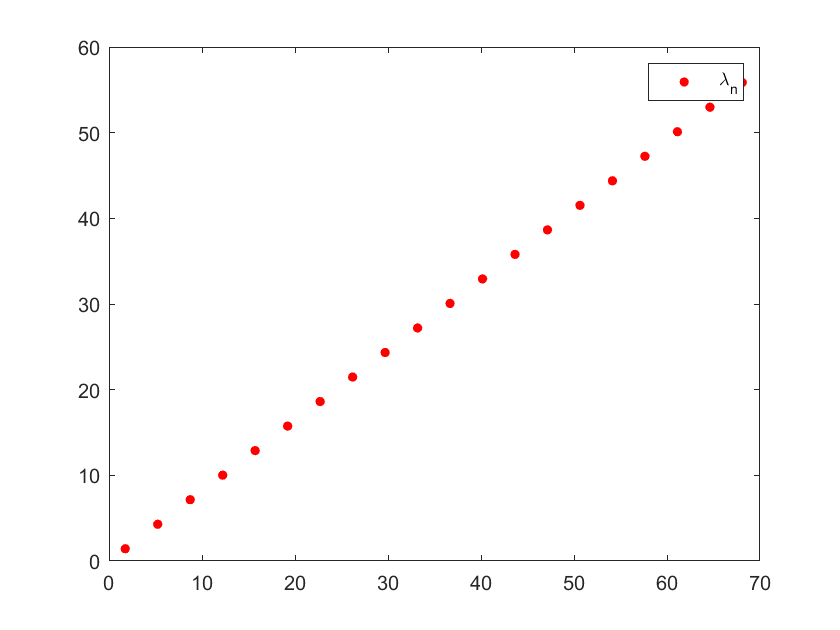}
	\label{fig:Fig2}.
\end{figure}
\begin{figure}
	\caption{The eigenfunctions of  $H_{c}$ for $c=1+5i$ associated to the eigenvalues $\lambda_{\text{1}}$, $\lambda_{\text{5}}$, $\lambda_{\text{10}}$, and $\lambda
		_{\text{20}}$.}
	\includegraphics[width=15cm]{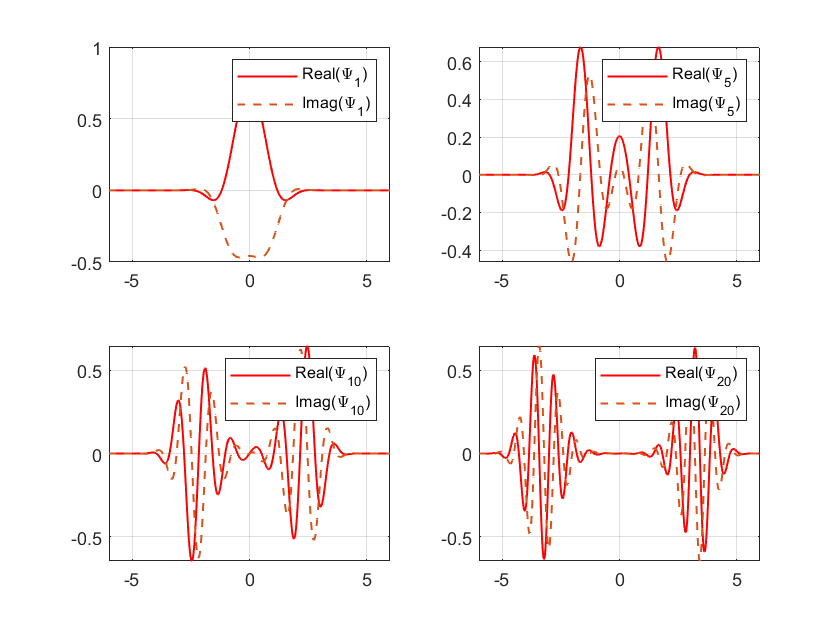}
	\label{fig:Fig3}.
\end{figure}

In ~\cite{Boultonn10}, the author proves that the
resolvent norm of $H_{c}$ tends to infinity along the curves of the
form
$z_{\eta }= b\eta + c\eta ^{p},$ where $b$ and $p$ are some constants independent of
 $\eta> 0$ such that $ b > 0$ and $\frac{1}{3} < p < 3$, i.e:
\begin{equation}\label{eq1}
	\lim_{\eta \rightarrow +\infty }\|(H_{c}-z_{\eta})^{-1}\| =\infty.
\end{equation}
On the other hand, he shows that for all $ b > 0$, there exists a constant $M_{b}$ such that
\begin{equation*}
	\lim_{\eta \rightarrow \infty }\|(H_{c}-(\eta+ib))^{-1}\| \leq M_{b}.
\end{equation*}
and
\begin{equation*}
	\lim_{\eta \rightarrow \infty }\|(H_{c}-c(\eta-ib))^{-1}\| \leq M_{b}.
\end{equation*}
 Given some numerical calculations performed by E.B. Davies in
~\cite{Daviesn20}, L.S.Boulton has conjectured that the index $ p =
\frac{1}{3}$ is the critical one. Indeed, for  $0 < p <\frac{1}{3},$ $m \in \mathbb{N}$, $0 < \delta < 1$ be fixed and $
b_{m,p}> 0 $ such that there exists $E > 0 $ verifying
$$ b_{m,p}E+ cE^{p}= \lambda_{m }.$$
Let $z_{\eta }= b_{m,p}\eta + c\eta ^{p} $ and $$\Omega_{m,p} = \{
|z_{\eta }| e^{i\theta}\in\mathbb{C} : \eta \geq E \text{ and }
arg(z_{\eta }) \leq \theta \leq arg(c\overline{z_{\eta }}/|c|).
$$ Then $$ \sigma_{\varepsilon
}(H_{c}) \subset \Omega_{m,p} \cup \bigcup_{n=0}^{m}\{ z \in
\mathbb{C} : |z-\lambda_{n}| < \delta \}. $$
This result permits us to precise the
pseudospectral shape of the complex harmonic oscillator, which is
optimal in view of (\ref{eq1}).
\\ In the next section, we study the case $p = \frac{1}{3}$ and we
prove that the resolvent norm of the operator $H_{c}$ is bounded
along $z_{\eta }$ in this case.
\section{Study of the resolvent estimates in the case $ p = \frac{1}{3}$ }
\begin{theorem}\label{thm1}
Let $H_{c}$ be the complex harmonic oscillator defined by
\begin{equation*}
H_{c}=-\frac{d^{2}}{dx^{2}}+cx^{2},
\end{equation*} where $Re(c)>0$ and $Im(c)$ $\neq 0.$%
\newline
We suppose that there exists C$_{0}>0$ such that
\begin{equation} \label{eq2} |Im(z)| \leq C_{0}(Re(z))^{\frac{1}{3}}.
\end{equation} Then:
$$
\| ( H_{c}-zI) ^{-1}\| =O( |z| ^{-\frac{1}{3}}),
$$
when $| z| \rightarrow +\infty$.
\end{theorem}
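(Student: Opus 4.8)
\textit{Strategy of proof.} The plan is to reduce the resolvent bound to a coercivity estimate, strip off the large parameter by a unitary rescaling so that the problem becomes semiclassical, and then prove the resulting estimate by ellipticity away from a degenerate characteristic point plus a turning-point rescaling at that point, which is where the exponent $\tfrac13$ is produced.

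First I would observe that $\sigma(H_c)=\{c^{1/2}(2n+1):n\in\mathbb N\}$ lies on the ray $\arg\lambda=\tfrac12\arg(c)\neq0$, so every $z$ obeying \eqref{eq2} with $\mathrm{Re}(z)$ large is in the resolvent set; since then $\|(H_c-z)^{-1}\|^{-1}=\inf\{\|(H_c-z)u\|:u\in\mathcal D(H_c),\ \|u\|=1\}$, it suffices to prove $\|(H_c-z)u\|\gtrsim|z|^{1/3}\|u\|$. Next I would rescale $x=|z|^{1/2}X$ by the unitary $U f(X)=|z|^{1/4}f(|z|^{1/2}X)$: with $h=|z|^{-1}$, $E=z/|z|$ and $P_h=(hD_X)^2+cX^2$ one checks $U(H_c-z)U^{-1}=|z|\,(P_h-E)$, hence $\|(H_c-z)^{-1}\|=h\,\|(P_h-E)^{-1}\|$, while \eqref{eq2} forces $\mathrm{Re}(E)=\cos(\arg z)\to1$ and $|\mathrm{Im}(E)|\le|\arg z|\le|\mathrm{Im}(z)|/\mathrm{Re}(z)\le C_1h^{2/3}$. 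So the whole statement is equivalent to the semiclassical coercivity estimate $\|(P_h-E)u\|\ge c\,h^{2/3}\|u\|$ for $h$ small and $E$ in this range, and the prefactor $h$ turns $h^{2/3}$ into exactly $|z|^{-1/3}$.

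To prove that estimate I would localize. One may assume $\mathrm{Im}(c)>0$, the opposite sign following from the anti-unitary equivalence $u\mapsto\bar u$, $c\mapsto\bar c$, $z\mapsto\bar z$, which leaves both $\|(H_c-z)^{-1}\|$ and \eqref{eq2} unchanged. The symbol $p-E=\xi^2+cX^2-E$ is elliptic at infinity (so $P_h$ has compact resolvent), and on the real characteristic set of $\mathrm{Re}(p)-\mathrm{Re}(E)$ — the ellipse $\xi^2+\mathrm{Re}(c)X^2=\mathrm{Re}(E)$ — one has $|\mathrm{Im}(p)-\mathrm{Im}(E)|=|\mathrm{Im}(c)X^2-\mathrm{Im}(E)|\ge\tfrac12\mathrm{Im}(c)\rho^2$ whenever $|X|\ge\rho$ and $h$ is small. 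A cut-off/commutator argument then confines matters to small neighbourhoods of the two degenerate points $(X,\xi)=(0,\pm\sqrt{\mathrm{Re}(E)})$: with $\chi=\chi(X/\rho)$ one gets $\tfrac12\mathrm{Im}(c)\rho^2\|(1-\chi)u\|\le\|(P_h-E)u\|+O(h)$ from the imaginary part of the quadratic form together with $\|[(hD)^2,\chi]u\|=O(h)$, and on $\{|X|\le\rho\}$ the bounds $\|X^2\chi u\|\le\rho\|X\chi u\|$ and $\|X\chi u\|^2\lesssim h^{2/3}$ pin the frequency of $\chi u$ near $\xi=\pm1$ at scale $\rho h^{1/3}$. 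Near $(0,+\sqrt{\mathrm{Re}(E)})$ I would recenter the frequency by the gauge $e^{i\sqrt{\mathrm{Re}(E)}X/h}$ and rescale $X=h^{1/3}Y$; this is the turning-point scale, chosen so that $\xi^2-\mathrm{Re}(E)$, $\mathrm{Re}(c)X^2$ and $\mathrm{Im}(c)X^2$ are all of the common size $h^{2/3}$ (whence the $\tfrac13$), and it turns $P_h-E$ into $h^{2/3}$ times the first-order model $Q=2\sqrt{\mathrm{Re}(E)}\,D_Y+cY^2-i\,\mathrm{Im}(E)h^{-2/3}$, modulo a remainder $h^{4/3}D_Y^2$ that the $\rho h^{-2/3}$-frequency cap renders $O(\rho^2h^{2/3})$ and absorbable once $\rho$ is fixed small. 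Integrating the ODE $Qv=0$ gives $|v(Y)|=|v(0)|\exp\!\big(\tfrac1{2\sqrt{\mathrm{Re}(E)}}(\tfrac{\mathrm{Im}(c)}3Y^3-\mathrm{Im}(E)h^{-2/3}Y)\big)$, which is never in $L^2(\mathbb R)$, and the Airy/Weber-type resolvent estimate yields $\|Qv\|\ge\gamma\|v\|$ uniformly in the (bounded) parameters; transplanting back gives $\|(P_h-E)u\|\gtrsim h^{2/3}\|u\|$ near each characteristic point, closing the reduction. Equivalently, this last step is Hörmander's subelliptic estimate with loss $k/(k+1)=\tfrac23$ at a characteristic point where $\mathrm{Im}(p)$ vanishes to order $k=2$ along the Hamilton flow of $\mathrm{Re}(p)$ — here $H_{\mathrm{Re}(p)}^2\mathrm{Im}(p)=8\,\mathrm{Im}(c)\,\mathrm{Re}(E)>0$ — in the semiclassical form of Dencker–Sjöstrand–Zworski \cite{Zworski}.

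The hard part will be making the localization quantitative: estimating the pseudodifferential cut-off errors, handling the singular lower-order term $h^{4/3}D_Y^2$ uniformly in $h$, and — the genuinely delicate point — checking that the model estimate is stable under the complex energy shift $\mathrm{Im}(E)=O(h^{2/3})$, which is precisely the borderline size, since for $|\mathrm{Im}(z)|\gg(\mathrm{Re}\,z)^{1/3}$ the resolvent norm is in fact unbounded (Boulton's result recalled above). Tracking this dependence is exactly what forces the constant $C_0$ of \eqref{eq2} to enter the final bound, and it makes transparent why $p=\tfrac13$ is the critical index.
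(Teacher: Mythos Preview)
Your strategy is sound and yields the same $|z|^{-1/3}$ bound, but the execution differs substantially from the paper's. After the same semiclassical rescaling (the paper takes $h=1/\mathrm{Re}(z)$ rather than $1/|z|$, which is immaterial under \eqref{eq2}), the paper does \emph{not} localize via cutoffs followed by a second blow-up $X=h^{1/3}Y$ to a model operator. Instead, near each characteristic point it applies the Malgrange--Weierstrass preparation theorem to factor the symbol as $q=a(x,\eta,\mu)\,(\eta-\Lambda(x,\mu))$ with $a$ elliptic and $\Lambda(x,\mu)=-\tfrac{c}{2}x^{2}+\tfrac{i}{2}\mu+O(\mu^{2},\mu x^{2},x^{4})$, quantizes this to $Q=A\cdot\big(\tfrac{h}{i}\partial_{x}-\Lambda(x,\mu)+O_{\mu}(h)\big)$, and inverts the first-order factor \emph{explicitly} by an integrating factor. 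The resulting parametrix is an integral operator with kernel $K(x,y)=-\tfrac{i}{h}\,e^{-F(x)+F(y)}\chi_{\{x\le y\le a\}}$, and the bulk of the proof (their Theorem~\ref{thm2} and Lemmas~\ref{lem1}--\ref{lem3}) is a hands-on Schur estimate giving $\|K\|\le h^{-2/3}e^{C\mu^{3/2}/h}$, with \eqref{eq2} ensuring $\mu^{3/2}/h=|\mathrm{Im}(z)|^{3/2}/(\mathrm{Re}\,z)^{1/2}=O(1)$. Your route---second microlocalization to an Airy-type model and an appeal to the Dencker--Sj\"ostrand--Zworski subelliptic estimate at a point where $\mathrm{Im}\,p$ vanishes to second order along $H_{\mathrm{Re}\,p}$---is more conceptual and makes the origin of the exponent $\tfrac{2}{3}$ transparent, but it outsources the key analytic bound and leaves the cutoff/commutator bookkeeping (which you flag as the hard part) to be filled in; the paper's approach is more elementary and fully self-contained, at the price of the somewhat laborious pointwise estimates on $\mathrm{Re}\,F$. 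Both identify the critical role of $|\mathrm{Im}(z)|\lesssim(\mathrm{Re}\,z)^{1/3}$ in the same way: for you it keeps the complex shift $\mathrm{Im}(E)h^{-2/3}$ in the model operator bounded, for the paper it keeps $e^{C\mu^{3/2}/h}$ bounded.
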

\begin{proof}
We first assume that $\ Im(c)>0$ and $Im(z)>0$.\\
Let
$$
h = \frac{1}{Re(z)}$$  and $$\mu =\frac{Im(z)}{Re(z)}.$$
When $| z| \rightarrow +\infty $, it follows from (\ref{eq2}) that $h \rightarrow 0$ and $ \mu \rightarrow 0 $.\\
 $h $ and $\mu$
are defined to be respectively a semi-classical and spectral
parameter. \\Let $x = h^{-1/2}y ,$ then the operator $P = H_{c}-z$
verifies
\begin{equation}\label{eq3}
\begin{array}{lll}
\text{ \ }P & = & - h \dfrac{d^{2}}{dy^{2}}+ ch^{-1}y^{2\text{
}}-h^{-1}-iIm(z)
\\
 & = & h^{-1}[ -h^{2}\dfrac{d^{2}}{dy^{2}} + cy^{2\text{ }}-1-i\mu
].
\end{array}
\end{equation}
Consider the pseudo-differential operator
\[
Q = -h^{2}\frac{d^{2}}{dy^{2}}+cy^{2\text{ }}-1-i\mu,
\]
of symbol: $$q(y,\xi,\mu )=\xi ^{2}+cy^{2\text{ }}-1-i\mu.$$ For
$\mu = 0$, $Q$ has 2 characteristics: $(0,1)$ and
$(0,-1)$. \\
For $\mu $ small enough, $Q$ is elliptic outside fixed neighborhoods
$V_{(0,1)}$ and $V_{(0,-1)}$ respectively of $(0,1)$ and $(0,-1)$.
\subsection {First case: if $(x,\xi) \in V_{(0,1)}$}
 Let $\eta =\xi -1 $, then the symbol of $Q$ is
\[
q(x,\eta ,\mu )  =  \eta ^{2}+2\eta +cx^{2}-i\mu.
\]
We have $$q(0,0,0 )=0,$$ and $$\partial_{\eta}q(0,0,0 )=2.$$ Using
the Weierstrass factorization theorem, we get a factorization of
$q(x,\eta ,\mu )$ of the form
\begin{equation}\label{eq4}
q(x,\eta ,\mu )=a(x,\eta,\mu )(\eta -\Lambda (x,\mu )),
\end{equation} where
$a(x,\eta,\mu )\neq 0$ and
\[
\begin{array}{lll}
\Lambda(x,\mu ) & = & -1+(1-cx^{2}+i\mu )^{\frac{1}{2}}  \\ & = &
-\dfrac{c}{2}x^{2}+i \dfrac{\mu}{2}  +O(\mu ^{2},\mu x^{2},x^{4}).
\end{array}
\]
Using the factorization (\ref{eq4}) and referring to ~\cite{horn60} for
some technical constructions (see sections 6 and 7), we obtain the
decomposition of the pseudo-differential operator $Q$:
\begin{equation}\label{eq5}
Q = A(x,hD,h,\mu )(\frac{h}{i}\partial _{x}-\Lambda (x,\mu )+O_{\mu
}(h)),
\end{equation}
where $O_{\mu }(h)$ is uniformly bounded in $\mu$, $A$ is an
elliptic operator of principal symbol $a(x,\eta ,\mu ) $.\\ First,
we construct a microlocal inverse of the operator
$$\frac{h}{i}\partial _{x}-\Lambda (x,\mu ),$$ and then we prove that
the norm of this inverse is $O(h^{-2/3})$. Such inverse is microlocal since $x$ is assumed to be small enough.\\
Let
\[
(\frac{h}{i} \frac{d}{dx}-\Lambda (x,\mu ))u  = v,
\]
where $u$, $v \in L^{2}([-a_{0},a_{0}])$ for sufficiently small constant $a_{0}>0$.\\
Then $$ (\frac{d}{dx}-\frac{i}{h}\Lambda(x,\mu ) )u =
\frac{i}{h}v.$$ Let $F$ be a primitive of $-\frac{i}{h}\Lambda ,$
then
$$\frac{d}{dx}(e^{F}u)=\frac{i}{h}e^{F}v.$$
The following theorem is needed for the proof.
\begin{theorem}\label{thm2}
 Let $x$
and $y$ verifying $y\geq x$ and $max(|x|,|y|)\leq a_{0}$, then there
exist $\lambda
>0$ and $C>0$ such that $$ -g(x)+g(y)\leq \frac{\lambda
}{h}(x^{3}-y^{3})+C\frac{\mu ^{3/2}}{h},$$ where $g(x) = Re(F(x))$.
\end{theorem}
\begin{proof}
Since $g(x) = Re(F(x))$, then
$$
g^{\prime }(x) =  \frac{1}{2h}(-Im(c)x^{2}+\mu +O(\mu ^{2},\mu
x^{2},x^{4})).
$$
 Let $\mu ^{1/2}t = x$ and$\quad \mu ^{1/2}s =y.$\\
Consider the function $K$ such that $K(t)=g(\mu ^{1/2}t)$, then
 $$K^{\prime }(t)=\frac{\mu ^{3/2}}{2h}[
-Im(c)t^{2}+1+O(\mu ,\mu t^{2},\mu t^{4})].$$Take
$$K(t)=\frac{\mu ^{3/2}}{2h}[ -\frac{Im(c)}{3}t^{3}+t+O(\mu
t,\mu t^{3},\mu t^{5})].
$$
 \subsubsection {If $\|(x,y)\| \leq c_{1}\mu ^{1/2}$ for small positive constant
$c_{1}$ independent of $\mu$.} We have $\| (s,t)\| \leq c_{1}$ and
\begin{equation}\label{eq6}
-K(t)+K(s)\leq \frac{\mu ^{3/2}}{2h}[\frac{Im(c)}{3}%
(t^{3}-s^{3})-(t-s)+\gamma ],
\end{equation}
for small constant $\gamma >0$.
\begin{lem}\label{lem1}
For all $0<\varepsilon <1$ and $s\geq t$, we have
\[
s-t\leq \varepsilon (s^{3}-t^{3})+\frac{1}{\varepsilon}.
\]
\end{lem}
\begin{proof}
It is evident if $s-t < 1 $.\\
If $s-t \geq 1$, then
\begin{equation} \label{eq7} 
	(s-t)^{2} \leq 4(s^{2}+ st
+t^{2})(s-t) = 4(s^{3}-t^{3}).\end{equation}
For any $ a \in
 \mathbb{R} ,$ we have
$$a \leq \frac{1}{2}(\varepsilon a^{2} +\frac{1}{\varepsilon}),$$
then:$$\forall \varepsilon > 0 \ \ , \ s-t \leq
\frac{\varepsilon}{2} (s-t)^{2} +\frac{1}{2 \varepsilon}.$$ From
(\ref{eq7}), we have:$$s-t\leq 2 \varepsilon (s^{3}-t^{3})+\frac{1}{2
\varepsilon },$$ and we achieve the proof of Lemma \ref{lem1}.\end{proof}
We deduce from (\ref{eq6}) and
Lemma \ref{lem1} that
$$
-K(t)+K(s)  \leq  \frac{\mu ^{3/2}}{2h}[(\frac{Im(c)}{3}%
-\varepsilon )(t^{3}-s^{3})+\frac{1}{\varepsilon }+\gamma ],$$ and
that for sufficiently small $\varepsilon$, there exist $\lambda
>0$ and $C>0$ such that \begin{equation}\label{eq8} 
	 -K(t)+K(s) \leq \frac{\mu
^{3/2}}{h}(\lambda (t^{3}-s^{3})+C),
\end{equation} 
for $s$ and $t$ verifying $ s\geq t $ and  $ \| (s,t) \| \leq c_{1}$.\\
Return now to $x$ and $y$ such that $\mu ^{1/2%
}t=x$, $\mu ^{1/2}s =y$ and $y\geq x$. It follows from (\ref{eq8}) that
\begin{equation}\label{eq9} 
-g(x)+g(y)\leq \frac{\lambda }{h}(x^{3}-y^{3})+C\frac{\mu
^{3/2}}{h}.
\end{equation}
\subsubsection {If $\| (x,y) \| \geq c_{2} \mu ^{1/2} $ for some constant
$c_{2} > 0 $.} We have
$$
-g(x)+g(y)  =  \frac{1}{2h} \int_{x}^{y}(-Im(c)t^{2}+\mu +O(\mu
^{2},\mu t^{2},t^{4}))dt, $$ and there exists $c_{0}>0 $ such that
$$ -g(x)+g(y) \leq \frac{1}{2h}[-\frac{Im(c)}{3}(y^{3}-x^{3})+\mu
(y-x)+c_{0}(\mu ^{2}(y-x)+\mu (y^{3}-x^{3})+y^{5}-x^{5})].
$$
We have
\[
\begin{array}{lll}
y^{3}-x^{3} & \geq & \frac{1}{2} (y-x)(x^{2}+y^{2})\\
& & \\
 &\geq & \frac{1}{2}c_{2}^{2}\mu (y-x),
\end{array}
\]
and since $\|(x,y)\|\leq \delta$ (we can take
$\delta=\sqrt{2}a_{0}$), we have
\[
\begin{array}{llll}
y^{5}-x^{5} & =  & (y-x)(y^{4}+y^{3}x+y^{2}x^{2}+yx^{3}+x^{4})\\
& & \\
&\leq & 3(y-x)(x^{2}+y^{2})^{2}\\
& & \\
& \leq & 3\delta^{2}(y-x)(x^{2}+y^{2})\\
& & \\
 & \leq & 6\delta^{2}(y^{3}-x^{3}).
\end{array}
\]
We deduce that
$$ 
-g(x)+g(y) \leq  \frac{1}{2h}[-\frac{Im(c)}{3}(y^{3}-x^{3})+(2 /c_{2}^{2})%
(y^{3}-x^{3})+ 2\mu (c_{0}/c_{2}^{2})(y^{3}-x^{3}) +\mu
c_{0}(y^{3}-x^{3})+6c_{0}\delta^{2}(y^{3}-x^{3})].$$ For
 $c_{2}$ large enough, we conclude that for some constant $\lambda >0$
\[
-g(x)+g(y)\leq -\frac{\lambda}{h}(y^{3}-x^{3}).
\]
The inequality (\ref{eq9}) remains valid in this case and we achieve the
proof of Theorem \ref{thm2}.
\end{proof}
For $0<a\leq a_{0}$, we define with a rate of $O(e^{-\frac{c}{h}})$ the parametrix
 \begin{equation}\label{eq10}  u(x)=\frac{i}{h}\int_{a
}^{x}e^{-F(x)+F(y)}v(y)dy.
\end{equation}
then
\[
u(x)=Kv(x),
\]
where $K$ is an integral operator with the Kernel function
\[
K(x,y)=-\frac{i}{h}e^{-F(x)+F(y)}\chi _{\left( x\leq y\leq
a\right)},
\]
 We use Schur lemma to estimate $\|K\|_{L^{2}([-a_{0},a_{0}])}$.
\begin{lem}\label{lem2}
$$\|K\|_{L^{2}(I)}\leq (\sup_{x\in I}\int_{I} | K(x,y)| dy)^{1/2}(\sup_{y\in I}\int_{I} |
K(x,y)| dx)^{1/2},$$where  $I=[-a_{0},a_{0}]$.\end{lem} 
\begin{proof}
Cauchy-Schwarz inequality implies that
$$|Kv(x)|^{2}\leq\int_{I} |K(x,y)||v(y)|^{2}dy\int_{I}
|K(x,y)|dy.$$ The last integral is estimated by $\sup_{x\in
I}\int_{I} \left| K(x,y)\right| dy$, an integration with respect to
$x$ gives
$$\int_{I}|Kv(x)|^{2}dx \leq(\sup_{x\in I}\int_{I} | K(x,y)| dy)\int_{I}\int_{I}  |K(x,y)||v(y)|^{2}dxdy,$$
then
$$\int_{I}|Kv(x)|^{2}dx \leq(\sup_{x\in I}\int_{I} | K(x,y)| dy)(\sup_{y\in I}\int_{I} |
K(x,y)| dx)\int_{I}|v(y)|^{2}dy,$$ which completes the
proof.\end{proof}
Theorem \ref{thm2} implies that
\[
| K(x,y)| \leq \frac{1}{h} e^{(\lambda /h)(x^{3}-y^{3})} e^{C
\mu ^{3/2}/h} \chi _{( x\leq y\leq a)},
\]
\[
\int_{x}^{a}| K(x,y)| dy\leq \frac{1}{h}e^{C\mu ^{3/
2}/h}\int_{x}^{\infty}e^{-(\lambda/h)(y^{3}-x^{3})}dy,
\]
\[
\int_{-a_{0}}^{y}| K(x,y)| dx\leq \frac{1}{h}e^{C\mu ^{3/
2}/h}\int_{-\infty}^{y}e^{-(\lambda/h)(y^{3}-x
^{3})}dx.
\]
Let: $$h^{1/3}\tilde{x}=x, \ h^{
1/3}\tilde{y}=y.$$ We have
\begin{equation}\label{eq11} 
\sup_{x \in [-a_{0},a_{0}]}\int_{x}^{a}| K(x,y)| dy\leq h^{-2/3}e^{
C\mu ^{3/2}/h} \int_{\tilde{x}}^{+\infty }e^{{-\lambda(\tilde{y}}
^{3}{-\tilde{x}}^{3}{ )}}d\tilde{y},
\end{equation}
and
\begin{equation}\label{eq12} 
\sup_{y \in [-a_{0},a_{0}]}\int_{-a_{0}}^{y}| K(x,y)| dx\leq h^{-2/3}e^{%
C\mu ^{3/2}/h}\int_{-\infty }^{\tilde{y}}e^{{ -\lambda(\tilde{y}}%
^{3}{ -\tilde{x}}^{3}{ )}}d\tilde{x}.
\end{equation}
\begin{lem}\label{lem3}
\[
\int_{x}^{+\infty }e^{-\lambda(y^{3}-x^{3})}dy <\infty \text{ and }
\int_{-\infty }^{y}e^{-\lambda(y^{3}-x^{3})}dx <\infty.
\]
\end{lem}
\begin{proof} Let
\[
I(x)=\int_{x}^{+\infty }e^{{-\lambda(y}^{3}{ -x}^{3}{
)}}dy.
\]
$\ast $ If $x$ is bounded, it is evident since $\int^{+\infty
}_{x}e^{-\lambda y^{3}}dy < \infty$.\newline $\ast $ If $x$ is
unbounded:
\newline Let $y = t+x $, we have
$$
I(x) =  e^{\lambda x^{3}}\int_{0}^{+\infty }e^{-{\lambda(}%
x^{3}+3x^{2}t+3xt^{2}+t^{3})}dt,$$ $$ I(x) =  \int_{0}^{+\infty
}e^{-{ \lambda(}3x^{2}t+3xt^{2}+t^{3})}dt.$$

$\ast \ast $ When $x\rightarrow -\infty $, if $t = -xs$ then
\[
I(x)= - x \int_{0}^{+\infty }e^{\lambda x^{3}(\text{ }%
3s-3s^{2}+s^{3})}ds.
\]
Let $\Psi (s)=3s-3s^{2}+s^{3}$, then  $$\frac{\Psi
(s)}{s}=3-3s+s^{2}\geq \alpha,\text{ for some }\alpha> 0,$$ which
implies that
\[
I(x)\leq - x \int_{0}^{+\infty }e^{{\lambda\alpha }x^{3}s}ds =
O(\frac{1}{\text{ } x ^{2}})< \infty.
\]

$\ast \ast $ When $x\rightarrow +\infty $, if $t = xs$ then
\[
I(x)=x\int_{0}^{+\infty }e^{-\lambda (\text{
}3s+3s^{2}+s^{3})x^{3}}ds.
\]
$$
I(x)  \leq  x\int_{0}^{+\infty }e^{-\lambda x^{3}s}ds.$$$$ I(x) \leq
\frac{1}{\lambda x^{2}}< \infty.$$\vspace{.4cm}\\ Replace $I(x)$ by
the integral $\int_{-\infty }^{y}e^{-\lambda(y^{3}-x^{3})}dx $, we
obtain the same result and we achieve the proof of Lemma \ref{lem3}.
\end{proof}
Finally, by applying the Schur lemma, inequalities (\ref{eq11}), (\ref{eq12})
and Lemma \ref{lem3}, we obtain
\begin{eqnarray}\label{eq13}
\| (\frac{h}{i}\partial _{x}-\Lambda (x,\mu ))^{-1}\| & \leq &
(\sup_{x}\int_{x}^{a}| K(x,y)| dy)^{1/2}(\sup_{y}\int_{-a_{0}}^{y}|
K(x,y)| dx)^{1/2} \nonumber \\
 &\leq & h^{-2/3} e^{C\mu ^{3/2}/h}.
\end{eqnarray}
From (\ref{eq3}), (\ref{eq5}) and \ref{eq13}, we conclude that
\[
\| P^{-1}\| =O(h^{1/3})=O((Rez)^{-1/3}).
\]
Then
\[
\|( H_{c}-zI) ^{-1}\| =O(| z | ^{-1/3}).%
\]
\subsection {Second case: if $(x,\xi)\in V_{(0,-1)}$}
Let $\eta
=$ $\xi +1$ then the symbol of $Q$ is
\[
\begin{array}{lll}
q(x,\eta ,\mu ) & = & \eta ^{2}-2\eta +cx^{2}-i\mu,
\end{array}
\]
and
\[
q(x,\eta ,\mu )=a(x,\eta ,\mu )(\eta -\Lambda (x,\mu )),
\]
where
\[
\Lambda (x,\mu )=\frac{c}{2}x^{2}-i\frac{\mu }{2}+O(\mu ^{2},\mu
x^{2},x^{4}).
\]
It is enough to replace (\ref{eq10}) by the parametrix
\[
(\frac{h}{i}\partial _{x}-\Lambda (x,\mu
))^{-1}v=\frac{i}{h}\int_{-a }^{x}e^{-F(x)+F(y)}v(y)dy.
\]
Now, assume that $Im(c) < 0 $ and $ Imz > 0$.\\
$\ast $ if $(x,\xi) \in V_{(0,1)}$:
\[
\begin{array}{lll}
q(x,\xi ,\mu ) & = &  \xi^{2}+cx^{2}-1-i\mu.
\end{array}
\newline
\]
Let $\eta =\xi-1$, then
\[
\begin{array}{lll}
q(x,\eta ,\mu ) & = & \eta ^{2}+2\eta +cx^{2}-i\mu \\&  &\\ & = &
a(x,\eta ,\mu )(\eta -\Lambda (x,\mu )),
\end{array}
\]
\[
\Lambda (x,\mu )=-\frac{c}{2}x^{2}+i\frac{\mu }{2}+O(\mu ^{2},\mu
x^{2},x^{4}).
\]
Take the parametrix:
\[
u(x)=\frac{i}{h}\int_{-a }^{x}e^{-F(x)+F(y)}v(y)dy.
\]
$\ast$ If $(x,\xi) \in V_{(0,-1)}$:
\[
\Lambda (x,\mu )=\frac{c}{2}x^{2}-i\frac{\mu }{2}+O(\mu ^{2},\mu
x^{2},x^{4}),
\]
\[
u(x)=\frac{i}{h}\int_{a }^{x}e^{-F(x)+F(y)}v(y)dy.
\]
Finally if $ Imz < 0$, take $-\mu $ instead of $\mu
$. \\
Recall that outside $V_{(0,1)}$ and $V_{(0,-1)}$, $Q$
is an elliptic operator and then we have $$\| P^{-1}\| = O(h).$$ We
conclude that $ \| P^{-1}\| =O(h^{1/3})$ is verified in this case
and we achieve the proof of Theorem \ref{thm1}. \end{proof}
\begin{rem}
Boulton shows that
\[
\lim_{\eta \rightarrow +\infty }\|( H_{c}-(b\eta +c\eta ^{p}))
^{-1}\| = +\infty \text{ \ for \ }\frac{1}{3}< p< 3.
\]
By applying Theorem \ref{thm1}, we deduce that for $0 < p\leq
\frac{1}{3}$:
\[
\lim_{\eta \rightarrow +\infty }\| ( H_{c}-(b\eta +c\eta ^{p}))
^{-1}\| = 0.
\]
\end{rem}
\section{Numerical results}
\subsection{Spectral discretization of the complex harmonic oscillator}
The Chebfun system contains algorithms that yield high-precision resolution spectral collocation methods on Chebyshev grids. The Chebops tools in the Chebfun system for solving differential equations are summarized in ~\cite{TrefethenF}, ~\cite {Driscoll}, ~\cite {TrefethenT}, ~\cite{TrefethenS} and ~\cite {Hale}. \\
The Chebops implementation effectively merges the numerical analysis approach of spectral collocation with the computational methodology of spectral discretization matrices. \\
The Spectral Collocation method for solving differential equations consists of constructing weighted interpolants of the form:
\begin{equation}\label{eq14}
	u(x)\approx P_{N}(x)=\sum_{j=0}^{N}\frac{\alpha(x)}{\alpha(x_{j})}\phi_{j}(x)u_{j},
\end{equation}	
where $x_{j}$ for $j=0,....,N$ are interpolation nodes, $\alpha(x)$ is a weight function, $$u_{j}=u(x_{j}),$$ and the interpolating functions
$\phi_{j}(x)$ satisfy $$\phi_{j}(x_{k})=\delta_{j,k}$$ and $$u(x_{k})=P_{N}(x_{k})$$ for $k= 0,....,N$.\\ Hence $P_{N}(x)$ is an interpolant of the function $u(x)$.\\ By taking $l$ derivatives of (\ref{eq14}) and evaluating the result at the nodes  $x_{j}$, we get:
$$u^{(l)}(x_{k})\approx \sum_{j=0}^{N}\frac{d^{l}}{dx^{l}}\left[\frac{\alpha(x)}{\alpha(x_{j})}\phi_{j}(x)\right]_{x=x_{k}}, \quad k= 0,....,N.$$
The entries define the differentiation matrix:  
$$D^{(l)}_{k,j}= \frac{d^{l}}{dx^{l}}\left[\frac{\alpha(x)}{\alpha(x_{j})}\phi_{j}(x)\right]_{x=x_{k}}.$$
The derivatives values $u^{(l)}$ are approximated  at the nodes $x_{k }$ by $ D^{(l)}u $.\\
The derivatives are converted to a differentiation matrix form and the differential equation problem is transformed into a matrix eigenvalue problem.\\
The eigenfunctions 	$u(x)$ of the eigenvalue problem approximate finite terms of Chebyshev polynomials as
\begin{equation} \label{eq15}P_{N}(x)= \sum_{j=0}^{N}\phi_{j}(x)u_{j},
\end{equation}
where the weight function $\alpha(x)$=1, $\phi_{j}(x)$ is the Chebyshev polynomial of degree $\leq N$ and $u_{j}=u(x_{j})$.\\
The Chebyshev collocation points are defined by: 
\begin{equation} \label{eq16} x_{j}=cos(\frac{j\pi}{N}),\quad j=0,....,N. \end{equation}
A spectral differentiation matrix for the Chebyshev collocation points is created by interpolating a polynomial through the collocation points, i.e., the polynomial
$$P_{N}(x_{k})= \sum_{j=0}^{N}\phi_{j}(x_{k})y_{j}.$$
The derivatives values of the interpolating polynomial  (\ref{eq15}) at the Chebyshev collocation points  (\ref{eq16}) are:
$$ P_{N}^{(l)}(x)=\sum_{j=0}^{N}\phi_{j}^{(l)}(x_{k})y_{j}.$$The differentiation matrix $D^{(l)}$ with entries 
\begin{equation}\label{eq17}
	D^{(l)}_{k,j}=\phi_{j}^{(l)}(x_{k})\end{equation}
 is explicitly determined in ~\cite {TrefethenF} and  ~\cite {Weiden}.\\
For further information on convergence rates, collocation differentiation matrices, and the efficiency of the Chebyshev collocation method, refer to ~\cite{TrefethenF, Weiden,CANUTO,Gottlieb}.\\
For the complex
harmonic oscillator, $H_{c}$ has been discretized by a Chebyshev
collocation spectral method on a finite interval $[-L,L]$ with
Dirichlet boundary conditions.
We define the matrix $H^{N}_{c}$ with
 entries $$(H^{N}_{c})_{ij}=-P_{j}^{(2)}(x_{i})+cx^{2}_{i}P_{j}(x_{i}),$$
 for all $1 \leq i
 , j \leq N-1 $. Then the Chebyshev-approximation of $H_{c}$ is defined by 
 $$H^{N}_{c}=-D_{N}^{(2)}+S,$$ where $D_{N}^{(2)}$ is the second order spectral differentiation matrix defined in (\ref{eq17}) rescaled to $[-L,L]$ instead of $[-1,1]$ and $$S=diag(cx^{2}_{1},cx^{2}_{2},....,cx^{2}_{N-1}).$$
\subsection{Numerical Computation of pseudo-spectra}
 A starting point for computations is an equivalent definition of pseudo-spectra. $\sigma_{\varepsilon}(A)$ can be characterized by eigenvalues of perturbed matrices (see ~\cite{ TrefethenF}):
$$\sigma_{\varepsilon}(A) =\{z\in{\mathbb{C}}: z\text{ is an eigenvalue of } A+E \text{ for some } E \text{ with } \lVert E \rVert\leq \varepsilon
\}.$$
If $\rVert.\rVert$ is the 2-norm, as is convenient and physically appropriate in most
applications, then a further equivalence is
\[ \sigma _{\varepsilon }(H_{c}^{N} ) = \{ z \in {\mathbb{C}} : \sigma_{min}(H^{N}_{c}- zI )\leq \varepsilon
\},
\]
where $\sigma_{min}(H^{N}_{c}$) denotes the minimum singular value of
$H_{c}^{N}$.
We calculate $\sigma_{min}(H^{N}_{c}-
zI )$ on a grid of points
$z_{ij}$ using Chebfun algorithms.\\
We show that the pseudo-spectra of the matrix $H^{N}_{c}$ become
much larger when the order of the discretization $N $ increases and
tends to represent the pseudo-spectra of $H_{c}$.\\
Figure \ref{fig:Fig1} shows the boundaries of some $\varepsilon$-pseudo-spectra of the approximate matrix $ H^{N}_{c} $
 for  $ N = 200 $, $L = 6 $ and $ c = 1 + 5i$.\\
The numerical computations confirm the
theoretical results. The first eigenvalues of the matrix $ H^{N}_{c}
$ are regularly spaced numbers along a ray close to the line
$c^{\frac{1}{2}}\mathbb{R}_{+}$. A spectral stability characterizes these eigenvalues. However, far from this line, the resolvent norm appears to grow exponentially as $\lvert z \lvert\rightarrow \infty$ along any ray between
$0$ and $arg(c)$, so every value of $z$ sufficiently far out in this infinite
sector is a pseudo-eigenvalue for an exponentially small value of $\varepsilon$. We conclude the high-energy instability of the spectrum under small perturbations in physical problems described by this operator. Other numerical computations can be found in ~\cite{Daviesn20,TrefethenF}.
\begin{figure}[h]
	\caption{Eigenvalues and $\varepsilon$-pseudo-spectra of the
		matrix $H_{c}^{N}$ for $N = 200$, $c=1+5i$. The left column gives the
		corresponding values of $\varepsilon$.}
\includegraphics[width=16cm]{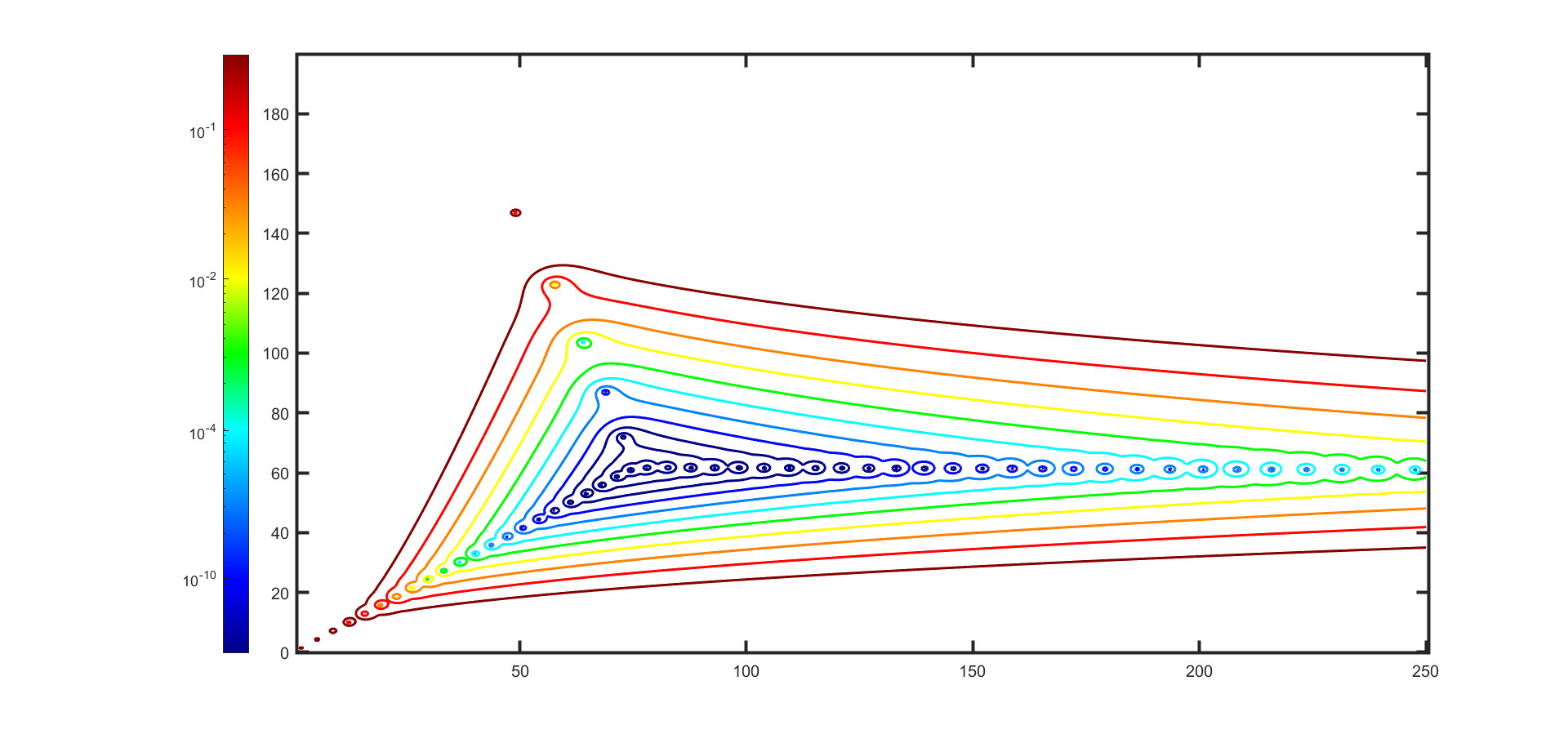}
\label{fig:Fig1}.
\end{figure}
	\section{Conclusion}

	Our study builds on the foundational work of E.B. Davies, L.S. Boulton, and N. Trefethen, who have contributed significantly to our understanding of resolvent norms in the context of complex potentials. By focusing on the resolvent norm of the complex harmonic oscillator along curves defined by $z_{\eta }= b\eta + c\eta ^{p} $ where $ b > 0$,
	$ \frac{1}{3}< p <3 $ independent of $\eta> 0$, we have established a framework for analyzing the spectral properties of these operators. In particular, we have refined our focus to the case where $p = \frac{1}{3}$, providing a more precise analysis that provides a crucial understanding of the resolvent norm behavior and the associated pseudo-modes. The pseudo-eigenvalues provide a clear interpretation of the stability and instability phenomena that can arise in the presence of complex potentials. Stability problems are of growing interest in mathematical physics due to their implications in quantum mechanics and non-Hermitian operator theory. Moreover, exploring the implications of our results in higher dimensions could yield significant information into the pseudospectral properties of multi-dimensional Schrödinger operators with complex potentials. Although the challenges associated with higher-dimensional analysis are non-trivial, significant advancements may result from understanding the spectral properties of more complex systems.

\bibliographystyle{unsrtnat}
\bibliography{references}

\end{document}